 \newtheorem{thm}{}[section]
 \newtheorem{theorem}[thm]{Theorem}
 \newtheorem{corollary}[thm]{Corollary}
 \newtheorem{lemma}[thm]{Lemma}
 \newtheorem{proposition}[thm]{Proposition}
 \theoremstyle{definition}
 \newtheorem{definition}[thm]{Definition}
 \theoremstyle{remark}
 \newtheorem{remark}[thm]{Remark}
 \numberwithin{equation}{section}
\newcommand{\NN}{\ensuremath{\mathbb{N}}}
\newcommand{\XX}{\ensuremath{\mathbb{X}}}
\newcommand{\xx}{\ensuremath{\mathbf{e}}}
\newcommand{\BB}{\ensuremath{\mathcal{B}}}
\newcommand{\GG}{\ensuremath{\mathcal{G}}}
\newcommand{\gr}{\ensuremath{\mathbf G}}
 \newcommand{\sgn}{\mathop{\mathrm{sign}}\nolimits}
 \newcommand{\supp}{\mathop{\mathrm{supp}}\nolimits}
\newcommand{\spn}{\mathop{\mathrm{span}}\nolimits}
\newcommand{\co}{\mathop{\mathrm{co}}\nolimits}
\begin{document}

\title{Characterization of $1$-almost greedy bases}
\author[F. Albiac]{F. Albiac}\address{Mathematics Department\\ Universidad P\'ublica de Navarra\\Campus de Arrosad\'{i}a\\
 Pamplona\\ 31006 Spain\\
 Tel.: +34-948-169553\\
 Fax: +34-948-166057\\
} \email{fernando.albiac@unavarra.es}
 \author[J. L. Ansorena]{J. L. Ansorena}\address{Department of Mathematics and Computer Sciences\\
Universidad de La Rioja\\ Edificio Luis Vives\\
 Logro\~no\\
26004 Spain\\
 Tel.: +34-941299464 \\
 Fax: +34-941299460} \email{joseluis.ansorena@unirioja.es}

\subjclass[2000]{46B15, 41A65, 46B15}

\keywords{thresholding greedy algorithm, quasi-greedy basis, almost greedy basis, unconditional basis, Property (A)}

\begin{abstract} This article closes the cycle of  characterizations of greedy-like bases in the ``isometric'' case initiated in \cite{AlbiacWojt2006} with the characterization of $1$-greedy bases and continued in \cite{AAJAT} with the characterization of $1$-quasi-greedy bases.  Here we settle  the problem of providing a characterization of $1$-almost greedy bases in Banach spaces.  We show that a basis in a Banach space is almost greedy with almost  greedy constant equal to $1$ if and only if it has Property (A). This fact permits now to state that a basis is $1$-greedy if and only if it is $1$-almost greedy and $1$-quasi-greedy. As a by-product of our work we also provide a tight characterization of almost greedy bases.
  \end{abstract}


\maketitle

\section{Introduction and background}\label{intro}
\noindent Suppose $(\XX, \Vert\cdot\Vert)$ is an infinite-dimensional  separable  (real or complex) Banach space and  $\BB=(\xx_n)_{n=1}^\infty$ is a 
(Schauder)  basis for $\XX$. We will denote by $(\xx_n^*)_{n=1}^\infty$ the corresponding  biorthogonal functionals. Assume that  $\BB$ is semi-normalized, i.e.,
$0<\inf_n \Vert \xx_n\Vert \le \sup_n \Vert \xx_n\Vert<\infty$. Each $x\in \XX$ has a unique  series expansion in terms of the basis,
\[
x=\sum_{n=1}^\infty \xx_n^*(x)\xx_n.
\]
The {\it support} of an  $x\in \XX$ is the set supp$(x)=\{n\in \NN\colon \xx_{n}^{\ast}(x)\not=0\}$. 

A \textit{greedy ordering} for $x$ is an injective mapping
$\rho\colon\NN\to \NN$  such that  $\supp(x)\subset \rho(\NN)$  and $|\xx_{\rho(i)}^{\ast}(x)|\ge |\xx_{\rho(j)}^{\ast}(x)|$ if $i\le j$. For each $m\in \NN$, an
\textit{$m$-term greedy approximation} to $x$, denoted by $\gr_m(x)$, is the $m$-th partial sum of the (formal) rearranged series $ \sum_{i=1}^{\infty}\xx_{\rho(i)}^{\ast}(x)\xx_{\rho(i)}$ for some greedy ordering $\rho$ of $x$, i.e.,
\[
\gr_m(x)=  \sum_{i=1}^{m}\xx_{\rho(i)}^{\ast}(x)\xx_{\rho(i)}.
\]  
If   the sequence $(\xx_n^*(x))_{n=1}^\infty$ contains several terms with the same absolute value then the greedy ordering for $x$ is not unequivocally determined.  However,
there is a unique greedy ordering $\rho$ for $x$ fulfilling the extra properties that $\rho(i)\le\rho(j)$ if $|\xx_{\rho(i)}^{\ast}(x)|= |\xx_{\rho(j)}^{\ast}(x)|$ 
and $\rho(\NN)=\NN$ if $x$ is finitely supported (notice that  $\rho(\NN)=\supp (x)$ is $x$ is infinitely supported). We will refer to this ordering  as the {\it natural greedy ordering} for $x$.
With this convention, the {\it $m$-th natural greedy 
approximation}  to $x$ is given by
$$
   \GG_{m}[\BB, \XX](x):=\GG_{m}(x) = \sum_{i=1}^{m}\xx_{\rho(i)}^{\ast}(x)\xx_{\rho(i)},
$$
where $\rho$ is the natural greedy ordering.

Konyagin and Temlyakov  \cite{KonyaginTemlyakov1999} defined a basis to be \textit{ greedy} if $\GG_{m}(x)$ is essentially the \textit{best $m$-term approximation} to $x$ using basis vectors, i.e., there exists a constant $C\ge 1$ such that, for all $x\in \XX$ and $m\in \NN$, 
\begin{equation}\label{defgreedy}
\Vert x-\GG_{m}(x)\Vert \le C\inf\{\Vert x-\sum_{n\in A}\alpha_{n}\xx_{n}\Vert\colon |A|=m, \,  \alpha_{n}\text{ scalars}\}.
\end{equation}
The smallest $C$ in \eqref{defgreedy} is the \textit{greedy constant} of the basis and will be denoted by $C_{g}$.
They then showed that greedy bases can be intrinsically characterized as unconditional bases with the additional property of being \textit{democratic}, i.e., $\Vert \sum_{n\in A} \xx_{n}\Vert\le \Delta \Vert \sum_{n\in B}  \xx_{n}\Vert$ whenever $|A|=|B|$, for some constant $\Delta\ge 1$. Recall also  that a basis  $(\xx_n)_{n=1}^\infty$  is \textit{unconditional}   if for $x\in \XX$ the series $\sum_{i=1}^\infty \xx_{\pi(i)}^*(x)\xx_{\pi(i)}$ converges  to $x$ for any permutation $\pi$ of $\NN$. 

The property of being unconditional is easily seen to be equivalent to that of being \textit{suppression unconditional}, which means that there is a constant $K$ such that for all $x\in\XX$
and all $A\subset \NN$, $A$ finite, 
\begin{equation}\label{unifbound}
\left\Vert x- P_A(x)\right\Vert \le K \left\Vert x \right\Vert,
\end{equation}
where
 $
 P_A(x)=\sum_{n\in A} \xx_n^*(x) \xx_n
$
is the natural projection onto the linear span of
$\{\xx_{n}\colon n\in A\}$.
The smallest $K$ in \eqref{unifbound} coincides with the least constant  $K$ such that for all $x\in \XX$ and $ A\subset \NN$ finite we have
\[
\Vert P_A(x)\Vert   \le K\Vert x\Vert.
\] 
 This optimal constant is called the \textit{suppression unconditional constant} of the basis and is denoted by $K_{su}$.  
If $\BB$ is unconditional and $K\ge 1$ is  such that $K_{su}\le K$ we say that $\BB$ is \textit{$K$-suppression unconditional}.

 A basis which is democratic and unconditional is \textit{superdemocratic}, i.e., there exists a best constant $\Gamma\ge 1$ (\textit{$\Gamma$-superdemocratic}) such that the inequality
\[
\left\Vert \sum_{n\in A} \varepsilon_{n} \xx_{n}\right\Vert\le \Gamma \left\Vert \sum_{n\in B}\theta_{n} \xx_{n}\right\Vert
\]
holds for any two finite sets of integers $A$ and $B$ of the same cardinality, and any choice of signs $(\varepsilon_{n})_{n\in A}$ and $(\theta_{n})_{n\in B}$.

Another key concept in the theory, introduced as well by
Konyagin and Temlyakov  \cite{KonyaginTemlyakov1999},
 is that of \textit{quasi-greedy} basis. A basis is quasi-greedy if  for $x\in \XX$, $\lim_{m\to \infty}\GG_{m}(x)=x$, that is,  the  series $\sum_{i=1}^\infty \xx_{\rho(i)}^*(x)\xx_{\rho(i)}$ converges  to $x$, where $\rho$ is the natural greedy ordering for $x$.   Subsequently, Wojtaszczyk \cite{Wo2000} proved that these are precisely the bases for which the greedy operators $(\GG_m)_{m=1}^{\infty}$ are uniformly bounded (despite the fact that they are nonlinear on $x$), i.e., there exists a constant $C\ge 1$ such that for all $x\in \XX$ and $m\in \NN$, 
\begin{equation}\label{WoCondition}
\Vert \GG_m(x)\Vert\le C \Vert x\Vert.
\end{equation}
We will denote by $C_w$  the smallest constant in \eqref{WoCondition}. 

Obviously, if \eqref{WoCondition} holds  then there is a (possibly different) least constant $C_{\ell}\ge 1$ such that 
\begin{equation}\label{SupWoCondition}
\Vert x-\GG_m(x)\Vert\le {C_{\ell}} \Vert x\Vert, \quad x \in \XX, \,  m\in\NN.
\end{equation}
 Some authors call $C_{w}$  the \textit{quasi-greedy constant} of the basis, whereas others give that name to the number   \[
 C_{qg}=\max\{C_{w}, C_{\ell}\}.
 \]
For the time being, and while  a satisfactory consensus is reached,  by analogy with unconditional bases we will call the number $C_{\ell}$ in \eqref{SupWoCondition}  the \textit{suppression quasi-greedy constant} of the basis. Thus from now on  we will use \textit{$C$-suppression quasi-greedy} to refer to a quasi-greedy basis  such that \eqref{SupWoCondition} holds with a constant $C\ge C_{\ell}$. Regardless of one's preferences for the \textit{right} quasi-greedy constant, 
  if $\BB$ is   $K$-suppression unconditional  then $\BB$ is quasi-greedy with $C_{qg}\le K$.   

As  Wojtaszczyk pointed out in \cite{Wo2000}, the choice of the greedy ordering for each $x\in \XX$ with which to construct the greedy operators $(\gr_{m})_{m=1}^{\infty}$ plays no relevant role in the theory. 
Indeed, if a basis $(\xx_n)_{n=1}^\infty$ is quasi-greedy   then $x=\sum_{i=1}^\infty \xx^*_{\rho(i)}(x) \xx_{\rho(i)}$ for all $x\in\XX$
and all possible greedy orderings $\rho$ of $x$. Similarly,  if $(\xx_n)_{n=1}^\infty$
 is $C$-suppression quasi-greedy  then for all $x\in\XX$ and $m\in \NN$ we have
 \begin{equation}\label{eq:Csupcond}
 \Vert x-\gr_m(x)\Vert \le C \Vert x\Vert,\end{equation}
for any $m$-term greedy approximation $\gr_m(x)$ to $x$.

 Dilworth et al.\ introduced in \cite{DKKT2003} a  property for bases basis that is intermediate between quasi-greedy and greedy.
They defined a basis $\BB=(\xx_n)_{n=1}^\infty$  to be \textit{ almost greedy} if there exists a constant $C\ge 1$ such that for all $x\in \XX$ and $m\in \NN$,
 \begin{equation}\label{defalmostgreedy}
\Vert x-\GG_{m}(x)\Vert \le C\inf \{\Vert x- P_A(x) \Vert\colon |A|=m \}.
\end{equation}
Comparison with \eqref{defgreedy} shows that this is formally a weaker condition: in \eqref{defgreedy} the infimum is taken over all possible linear combinations that we can form with $m$ basis elements whereas in \eqref{defalmostgreedy} only projections of $x$ onto $m$-term subsets of $\BB$ are considered. 
 The least constant in \eqref{defalmostgreedy} is the \textit{ almost greedy constant} of the basis and is denoted by $C_{ag}$. If $\BB$ is an almost greedy basis and $C$ is a constant such that $C_{ag}\le C$ we say that $\BB$ is \textit{ $C$-almost greedy}.

 In this paper we are concerned with greedy-like bases in the ``isometric" case, i.e.,   in the case that the  constants that arise in  the context of greedy  bases in the three above-mentioned different forms  are $1$. This study was initiated in \cite{AlbiacWojt2006}, where the authors obtained the following characterization of $1$-greedy bases.
 
 \begin{theorem}[\cite{AlbiacWojt2006}*{Theorem 3.4}]\label{1GCharacterization} A basis $\BB$ for a Banach space $\XX$ is $1$-greedy if and only if $\BB$ is  $1$-suppression unconditional  and satisfies Property (A).
 \end{theorem}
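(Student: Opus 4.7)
I would prove each implication separately, with the $(\Leftarrow)$ direction being the more substantial one. Assume first that $\BB$ is $1$-greedy. For $1$-suppression unconditionality, I would use a standard pumping-up trick: given $x \in \XX$ and a finite set $A \subset \NN$, set $y_\lambda = x + \lambda \sum_{n \in A}\xx_n$. For a generic $\lambda > 2\|x\|_\infty$ the $|A|$ largest coefficients of $y_\lambda$ lie exactly at $A$, so $\GG_{|A|}(y_\lambda) = P_A(y_\lambda)$ and $y_\lambda - \GG_{|A|}(y_\lambda) = x - P_A(x)$; comparing against the $|A|$-term test vector $z = \lambda \sum_{n \in A}\xx_n$ yields $\|x - P_A(x)\| \le \|y_\lambda - z\| = \|x\|$. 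For Property~(A), given $x$ with $\|x\|_\infty \le 1$, disjoint finite sets $A, B$ of equal cardinality both disjoint from $\supp(x)$, and signs $(\varepsilon_n)_{n\in A}$, $(\theta_n)_{n\in B}$, I would consider
\begin{equation*}
z_\delta = x + (1+\delta)\sum_{n \in A}\varepsilon_n\xx_n + \sum_{n \in B}\theta_n\xx_n
\end{equation*}
for small $\delta > 0$. The perturbation ensures $\GG_{|A|}(z_\delta) = (1+\delta)\sum_{n\in A} \varepsilon_n\xx_n$, and $1$-greediness compared against the test vector $\sum_{n\in B}\theta_n\xx_n$ gives $\|x + \sum_{n\in B} \theta_n\xx_n\| \le \|x + (1+\delta)\sum_{n\in A} \varepsilon_n\xx_n\|$; letting $\delta \to 0^+$ and swapping $A \leftrightarrow B$ yields equality.

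Now assume $\BB$ is $1$-suppression unconditional and has Property~(A). Fix $x \in \XX$, $m \in \NN$, and any $m$-term vector $y = \sum_{n \in A}\alpha_n\xx_n$; let $B$ be the natural greedy set of $x$ of cardinality $m$ and set $t = \min_{n \in B}|\xx_n^*(x)|$, so $|\xx_n^*(x)| \ge t$ for $n \in B$ and $|\xx_n^*(x)| \le t$ for $n \notin B$. Since $y$ is supported on $A$ one has $P_A(x) - y = P_A(x - y)$, hence $x - P_A(x) = (x-y) - P_A(x-y)$ and $1$-suppression unconditionality applied to $x-y$ yields $\|x - P_A(x)\| \le \|x-y\|$. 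The task therefore reduces to the intrinsic inequality $\|x - P_B(x)\| \le \|x - P_A(x)\|$. Writing $D = B \setminus A$, $E = A \setminus B$ (so $|D| = |E|$), and $F = \NN \setminus (A \cup B)$, this reads
\begin{equation*}
\|P_F(x) + P_E(x)\| \le \|P_F(x) + P_D(x)\|,
\end{equation*}
where the coefficients on $D$ have modulus at least $t$ and those on $E$ and $F$ have modulus at most $t$.

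The bridge between the two sides is Property~(A) applied to $t^{-1}P_F(x)$ (sup-norm $\le 1$, support disjoint from $D \cup E$) together with the sets $D, E$ and the signs $\sgn(\xx_n^*(x))$; after rescaling by $t$, this delivers the identity
\begin{equation*}
\Bigl\|P_F(x) + t\sum_{n \in D}\sgn(\xx_n^*(x))\xx_n\Bigr\| = \Bigl\|P_F(x) + t\sum_{n \in E}\sgn(\xx_n^*(x))\xx_n\Bigr\|.
\end{equation*}
It then remains to prove two monotone-replacement estimates: (i) replacing the small coefficients on $E$ by their sign-$t$ counterparts does not decrease the norm; and (ii) replacing the large coefficients on $D$ by their sign-$t$ counterparts does not increase it. Both reduce to a single-coordinate convexity observation: if $v \in \XX$ is supported off $\{n\}$ and $\beta = \mu\alpha$ with $\mu \in [0,1]$, then $v+\beta\xx_n = (1-\mu)v + \mu(v+\alpha\xx_n)$, and the triangle inequality together with $\|v\| \le \|v+\alpha\xx_n\|$ (which is $1$-suppression unconditionality applied to the projection onto $\supp(v)$) gives $\|v+\beta\xx_n\| \le \|v+\alpha\xx_n\|$. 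Iterating coordinate-by-coordinate over $E$ (with $\alpha = t\sgn(\xx_n^*(x))$, $\beta = \xx_n^*(x)$, $\mu = |\xx_n^*(x)|/t$) proves (i); iterating over $D$ (with $\alpha = \xx_n^*(x)$, $\beta = t\sgn(\xx_n^*(x))$, $\mu = t/|\xx_n^*(x)|$) proves (ii).

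The main difficulty I anticipate is the inductive bookkeeping for (i) and (ii): at each single-coordinate replacement the updated base vector must still have the correct support-disjointness to invoke $1$-suppression unconditionality at the next step, and one must verify the boundary cases where a coordinate already lies at the critical level $t$ (so $\mu \in \{0,1\}$). Once these lemmas are in place, chaining (i), the Property~(A) identity, and (ii) yields $\|x - P_B(x)\| \le \|x - P_A(x)\| \le \|x - y\|$; since $y$ was arbitrary, this is exactly $1$-greediness of $\BB$.
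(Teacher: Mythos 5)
Your proof is correct. Note, though, that the paper does not actually prove this theorem: it is quoted from \cite{AlbiacWojt2006}*{Theorem 3.4}, so there is no in-paper argument to compare against line by line. The closest internal analogue is the machinery of Section 2 for the almost greedy case, and your argument tracks it quite faithfully: your reduction $\Vert x-P_A(x)\Vert=\Vert (x-y)-P_A(x-y)\Vert\le\Vert x-y\Vert$ and the target inequality $\Vert x-P_B(x)\Vert\le\Vert x-P_A(x)\Vert$ are exactly the splitting used in the proof of Proposition~\ref{CAGTheorem}(ii); your step (i) combined with the Property (A) bridge is precisely Proposition~\ref{ImprovingSLC} (which the paper proves by averaging over sign patterns rather than coordinate-by-coordinate, but the convexity mechanism is the same); and your step (ii) is the unconditional-basis counterpart of Lemma~\ref{finitemultiplier}/Theorem~\ref{multiplier}, with $1$-suppression unconditionality supplying the inequality $\Vert v\Vert\le\Vert v+\alpha\xx_n\Vert$ that quasi-greediness supplies there. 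The end product is the sharp form $CK$-greedy from $K$-suppression unconditional plus $C$-symmetric for largest coefficients, which is exactly what the Remark following Proposition~\ref{CAGTheorem} announces; specializing $C=K=1$ gives the theorem. Your forward direction (pumping up the coefficients on $A$, resp.\ perturbing by $1+\delta$ to force the greedy selection) is the standard argument and is sound; just make sure to record the trivial edge cases you flag ($t=0$, a coefficient already at level $t$, and $\sgn(0)$ for zero coordinates in $E$), all of which are harmless.
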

  
 In order to explain this characterization we need a few more definitions. 
 
Given a basis $\BB=(\xx_n)_{n=1}^\infty$  for $\XX$ and  $x$, $y$ in $\XX$ we say that $y$ is a \textit{greedy permutation} of $x$  if we can write
\begin{equation}\label{eq:greedypermutation}
x=z+t \sum_{n\in A} \varepsilon_{n} \xx_{n}\quad\text{and}\;\; y=z+ t \sum_{n\in B}\theta_{n} \xx_{n}
\end{equation}
for   some $z\in\XX$,
some sets of integers $A$ and $B$ of the same finite cardinality with $\supp(z)\cap (A\cup B)=\emptyset$,  some signs $(\varepsilon_{n})_{n\in A}$ and $(\theta_{n})_{n\in B}$, and some scalar $t$ such that
 $\sup_n |\xx_n^*(z)|\le t$. If, in addition, $A \cap B=\emptyset$, we say that $y$ is a 
 \textit{disjoint greedy permutation} of $x$.
 In other words, $y$ is obtained from $x$ by moving  those  terms of $x$ (or some of them) whose coefficients are  maximum in absolute value to  gaps in the support of $x$. We are also allowed to change the sign of (some of) the terms we move. Then, the basis
$\BB$ is said to  satisfy \textit{Property (A)} if $\Vert x\Vert=\Vert y\Vert$ whenever $y$ is a  disjoint greedy permutation of $x$.

 Property (A) can be relaxed by allowing a factor of distortion in the norm of vectors which are a disjoint greedy permutation of each other, which motivates the next concept. 
 
 \begin{definition}  A basis $\BB$ of a Banach space $\XX$ is said to be \textit{symmetric for largest coefficients} if there is a constant $C\ge 1$ (\textit{$C$-symmetric for largest coefficients}) such that  if $x$ and  $y$ are finitely supported vectors in $\BB$ we have $
 \Vert y\Vert \le C\Vert x\Vert$ whenever $y$ is a disjoint greedy permutation of $x$.
\end{definition} 

Note that for $C=1$ this is just  the above mentioned Property (A).  It is not hard to prove that
if $\BB$ is $C$-symmetric for largest coefficients and $y$ is a greedy permutation of $x$ then
 $\Vert y\Vert\le C^2 \Vert x\Vert$.
In particular, if $\BB$ has Property (A) and $y$ is a greedy permutation of $x$ then $\Vert y \Vert =\Vert x\Vert$ (which is the way Property (A) was originally defined in \cite{AlbiacWojt2006}).

The concept of symmetry for largest coefficients  is referred to as \textit{Property (A) with constant $C$} in \cite{DKOSZ2014}, 
 where the authors show the following generalization of Theorem~\ref{1GCharacterization}.
\begin{theorem}[\cite{DKOSZ2014}*{Theorem 2}]\label{DKOSGeneralization}
Let  $\BB$ be a basis of a Banach space $\XX$.  \begin{enumerate}
\item[(i)]  If  $\BB$ is $C$-greedy,
then it is  $C$-suppression unconditional and $C$-symmetric for largest coefficients.
 \item[(ii)] Conversely, if $\BB$  is $K$-suppression unconditional and and $C$-symmetric for largest coefficients, then it is $K^2C$-greedy.
\end{enumerate}
\end{theorem}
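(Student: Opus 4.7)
My plan proceeds via tailored auxiliary-vector constructions for each direction.

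For Part~(i), both consequences follow by a ``plant a spike'' trick that forces the greedy operator to pick a prescribed set, after which the freedom to choose any coefficients in the $C$-greedy inequality recovers $x$ (or $x$ up to a vanishing perturbation). To obtain $C$-suppression unconditional, given $x$ and finite $A \subseteq \supp(x)$ with $|A|=m$, I pick $t > \max_n|\xx_n^*(x)|$ and signs $(\varepsilon_n)_{n\in A}$ and set $y := P_{A^c}(x) + t\sum_{n\in A}\varepsilon_n\xx_n$; then $\GG_m(y) = t\sum_{n\in A}\varepsilon_n\xx_n$, so $y - \GG_m(y) = x - P_A(x)$, and applying $C$-greedy to $y$ with the $m$-term approximation $\sum_{n\in A}(t\varepsilon_n - \xx_n^*(x))\xx_n$---which recovers $x$---yields $\|x - P_A(x)\| \le C\|x\|$. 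To obtain $C$-symmetric for largest coefficients, starting from the definitional decomposition $x = z + t\sum_{n\in A}\varepsilon_n\xx_n$ and $y = z + t\sum_{n\in B}\theta_n\xx_n$, I form $w_\epsilon := z + (t+\epsilon)\sum_{n\in A}\varepsilon_n\xx_n + t\sum_{n\in B}\theta_n\xx_n$; the $\epsilon$-bumped coefficients on $A$ strictly dominate, so $\GG_m(w_\epsilon)$ strips them away leaving $y$, and applying $C$-greedy with approximation $\sum_{n\in B}t\theta_n\xx_n$ gives $\|y\| \le C\|x + \epsilon\sum_{n\in A}\varepsilon_n\xx_n\|$; $\epsilon\to 0^+$ finishes.

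For Part~(ii), I plan a direct chain of three bounds on $\|x-\GG_m(x)\|$. Set $y := x - \sum_{n\in F}\beta_n\xx_n$, let $A$ be the natural greedy set, $B := A\setminus F$, $D := F\setminus A$, $\alpha := \min_{n\in A}|\xx_n^*(x)|$, $h := P_{(A\cup F)^c}(x)$, and $\varepsilon_n := \sgn(\xx_n^*(x))$, so that $x - \GG_m(x) = h + P_D(x)$. The essential tool is the standard multiplier bound: for any $(\lambda_n)$ with $\lambda_n \in [0,1]$, the coefficient-multiplier operator $M_\lambda v := \sum_n \lambda_n\xx_n^*(v)\xx_n$ satisfies $\|M_\lambda\| \le K$, since the representation $\lambda_n = \int_0^1 \Ind_{\{\lambda_n \ge s\}}\,ds$ expresses $M_\lambda = \int_0^1 P_{\{n:\lambda_n\ge s\}}\,ds$ as an average of suppression projections, each of norm $\le K$. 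The chain then reads: (a) the multiplier $\lambda_n := |\xx_n^*(x)|/\alpha$ on $D$, $1$ on $\supp h$, $0$ elsewhere maps $h + \alpha\sum_{n\in D}\varepsilon_n\xx_n$ to $h + P_D(x)$, so $\|h+P_D(x)\| \le K\|h + \alpha\sum_{n\in D}\varepsilon_n\xx_n\|$; (b) $h + \alpha\sum_{n\in D}\varepsilon_n\xx_n$ is a disjoint greedy permutation of $h + \alpha\sum_{n\in B}\varepsilon_n\xx_n$ (take $z = h$, $t = \alpha \ge \max|\xx_n^*(h)|$, swap $B$ for $D$), giving factor $C$ by hypothesis; (c) since $\xx_n^*(y) = \xx_n^*(x)$ on $B \cup \supp h$ (both disjoint from $F$), the multiplier $\mu_n := \alpha/|\xx_n^*(x)|$ on $B$, $1$ on $\supp h$, $0$ elsewhere maps $y$ to $h + \alpha\sum_{n\in B}\varepsilon_n\xx_n$, giving $\|h + \alpha\sum_{n\in B}\varepsilon_n\xx_n\| \le K\|y\|$. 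Chaining the three estimates produces $\|x - \GG_m(x)\| \le K^2 C\|y\|$, which is the $K^2C$-greedy condition.

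The main obstacle is step~(c): extracting only a single factor of $K$ in the reduction of the arbitrary approximation $y$ to the flat vector $h + \alpha\sum_{n\in B}\varepsilon_n\xx_n$. The naive route---first suppressing $y$ onto $F^c$ using one factor of $K$, then rescaling $P_B(y)$ down to magnitude $\alpha$ using another---spends two independent applications of suppression unconditionality and yields only the weaker bound $K^3C$. The saving in the plan above relies on observing that, because $B$ and $\supp h$ are both disjoint from $F$, the required projection \emph{and} rescaling can be performed simultaneously by a single $[0,1]$-valued multiplier, to which the multiplier bound applies with constant $K$. I expect that verifying the tie-breaking behavior in step~(b) (where the natural greedy ordering is only unambiguously determined up to conventions at magnitude equal to $\alpha$) will require a routine perturbation/limit argument.
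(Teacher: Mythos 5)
Your proposal is correct. Note first that the paper does not actually prove this theorem---it is imported from \cite{DKOSZ2014}---so the natural comparison is with the closely related machinery the paper develops for Proposition~\ref{CAGTheorem} and with Remark~2.6. Your Part~(i) is the standard ``spike'' argument and is fine: the competitor $\sum_{n\in A}(t\varepsilon_n-\xx_n^*(x))\xx_n$ exactly reconstitutes $x$, and the $\epsilon$-bump plus limit handles Property~(A) with constant. Your Part~(ii) is a valid proof of the stated $K^2C$ bound, and your key observation in step~(c)---that the projection onto $F^c$ and the rescaling of $P_B$ down to level $\alpha$ can be done by a \emph{single} $[0,1]$-valued multiplier, hence cost only one factor of $K$ via the representation $M_\lambda=\int_0^1 P_{\{n:\lambda_n\ge s\}}\,ds$---is exactly the averaging idea behind the paper's Lemma~\ref{finitemultiplier} and Theorem~\ref{multiplier}. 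Where the paper's route differs is at your steps (a)--(b): instead of first un-flattening $h+\alpha\sum_{n\in D}\varepsilon_n\xx_n$ to $h+P_D(x)$ (one factor of $K$) and then invoking the disjoint greedy permutation (factor $C$), the paper's Proposition~\ref{ImprovingSLC} upgrades symmetry for largest coefficients, by a convex-hull argument, to the inequality $\Vert x\Vert\le C\Vert x-P_A(x)+t\sum_{n\in B}\varepsilon_n\xx_n\Vert$, which passes from $h+P_D(x)$ directly to $h+\alpha\sum_{n\in B}\varepsilon_n\xx_n$ at cost $C$ alone; combined with your step~(c) this yields the sharper $CK$-greedy conclusion stated in Remark~2.6, whereas your chain stops at $K^2C$ (which is all the theorem claims). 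Two small points to tidy up: in step~(b) the definition of $C$-symmetry for largest coefficients is stated only for finitely supported vectors, so since $h$ may have infinite support you need the same density reduction the paper uses in Proposition~\ref{ImprovingSLC} (your anticipated ``perturbation argument'' is really about this, not about tie-breaking, since the coefficients on $A$ genuinely dominate those off $A$ by the definition of the greedy set); and the degenerate case $\alpha=0$ forces $h=0$ and $P_D(x)=0$, so it is trivial but worth a sentence.
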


In turn, $1$-quasi-greedy bases have been recently characterized in \cite{AAJAT}.
\begin{theorem}[\cite{AAJAT}*{Theorem 2.1}]\label{1QGCharacterization}  Let $\BB$ be a basis in a Banach space $\XX$. The following are equivalent:
\begin{enumerate}
\item[(i)] $\BB$ is quasi-greedy with $C_{qg}=1$.
\item[(ii)] $\BB$ is quasi-greedy with $C_{w}=1$.
\item[(iii)] $\BB$ is suppression unconditional with suppression unconditional constant $K_{su}=1$.

\end{enumerate}

\end{theorem}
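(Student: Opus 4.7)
My plan is to close the cycle (iii) $\Rightarrow$ (i) $\Rightarrow$ (ii) $\Rightarrow$ (iii). The implication (i) $\Rightarrow$ (ii) is immediate from $C_w \le C_{qg}$. For (iii) $\Rightarrow$ (i): applying the hypothesis $\|P_A(x)\| \le \|x\|$ with $A = \supp(\GG_m(x))$ gives $C_w \le 1$; for $C_\ell \le 1$, since $K_{su}=1$ forces unconditional convergence of basis expansions, I would write $x - \GG_m(x)$ as the norm-limit of the finite projections $P_{\{1,\ldots,n\} \setminus \supp(\GG_m(x))}(x)$, each of norm at most $\|x\|$, and pass to the limit.

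The substantive step is (ii) $\Rightarrow$ (iii): assuming $C_w = 1$, I aim to prove $\|P_A(x)\| \le \|x\|$ for every finite $A \subset \NN$ and every $x \in \XX$. My first move is to reduce to the case where $x$ is finitely supported with $\supp(x) = B \supset A$. Since the semi-normalization of $\BB$ together with $C_w < \infty$ forces $\xx_n^*(x) \to 0$, one has $A \subset \supp(\GG_m(x))$ for all sufficiently large $m$; hence $P_A(\GG_m(x)) = P_A(x)$, while $\|\GG_m(x)\| \le \|x\|$ by hypothesis. A finite-support version of the suppression inequality therefore suffices, after letting $m \to \infty$.

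With $x$ finitely supported, the key idea is a convexity bootstrap. Set $u = P_A(x)$, $v = P_{B\setminus A}(x)$, and consider
\[
f(\lambda) = \|u + \lambda v\|, \qquad \lambda \ge 0,
\]
which is convex in $\lambda$ as the norm of an affine curve. For $\lambda \in [0, \lambda_0)$, where $\lambda_0 = \min_{n \in A}|\xx_n^*(x)| / \max_{n \in B \setminus A}|\xx_n^*(x)| > 0$, the $A$-coefficients of $u + \lambda v$ strictly exceed all $(B\setminus A)$-coefficients, so the natural greedy ordering selects precisely $A$ at stage $|A|$, giving $\GG_{|A|}(u + \lambda v) = u$; the hypothesis $C_w = 1$ then yields $f(\lambda) \ge \|u\| = f(0)$, and by continuity this extends to $\lambda = \lambda_0$. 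Convexity of $f$ propagates the inequality to every $\lambda \ge \lambda_0$: writing $\lambda_0 = (1 - \lambda_0/\lambda)\cdot 0 + (\lambda_0/\lambda)\cdot \lambda$ gives $f(\lambda_0) \le (1 - \lambda_0/\lambda) f(0) + (\lambda_0/\lambda) f(\lambda)$, which combined with $f(\lambda_0) \ge f(0)$ forces $f(\lambda) \ge f(0)$. Evaluating at $\lambda = 1$ yields $\|x\| = f(1) \ge f(0) = \|P_A(x)\|$, so $K_{su} \le 1$.

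The main conceptual obstacle is recognizing convexity of $\lambda \mapsto \|u + \lambda v\|$ as the correct lever: the hypothesis $C_w = 1$ only directly controls $\|P_A(x)\|$ by $\|u + \lambda v\|$ in the small-$\lambda$ regime where $A$ is the ``top'' set for greedy selection, and convexity is exactly what transports this restricted bound all the way to $\lambda = 1$. Once the right family $f(\lambda)$ has been isolated, the argument reduces to a one-line convex-analysis deduction; the boundary and degenerate cases (empty $A$, or $B\setminus A$ empty, or vanishing coefficients on $A$) are handled trivially before setting up $f$.
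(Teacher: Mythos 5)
Your argument is correct. Note that the paper itself does not prove this statement --- it is imported from \cite{AAJAT} --- but your proof is essentially the known one: the easy implications are handled as you do, and the substantive step (ii)$\Rightarrow$(iii) in the cited reference rests on exactly your convexity trick, usually phrased by writing $u+\lambda v=(1-\lambda)u+\lambda(u+v)$ for small $\lambda$ in the greedy regime, so that $\Vert u\Vert\le(1-\lambda)\Vert u\Vert+\lambda\Vert u+v\Vert$ gives $\Vert u\Vert\le\Vert u+v\Vert$ directly; your interpolation through $\lambda_0$ is a cosmetic variant of the same deduction.
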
 

Our aim is to complete the description of ``isometric" greedy-like bases by providing the following characterization  of almost greedy bases in the optimal case that $C_{ag}=1$.

 \begin{theorem}[Main Theorem]\label{1AGTheorem} A basis  in a Banach space  is $1$-almost greedy   if and only if it has Property (A).
 \end{theorem}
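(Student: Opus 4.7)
The direction ``$1$-almost greedy $\Rightarrow$ Property (A)'' I would handle via a short perturbation argument. Given $x=z+t\sum_{n\in A}\varepsilon_n\xx_n$ and a disjoint greedy permutation $y=z+t\sum_{n\in B}\theta_n\xx_n$, for $\delta>0$ form
\[
u_\delta = z + (t+\delta)\sum_{n\in A}\varepsilon_n\xx_n + t\sum_{n\in B}\theta_n\xx_n.
\]
Since $\sup_n |\xx_n^*(z)|\le t<t+\delta$ and $A\cap B=\emptyset$, the strictly largest $|A|$ coefficients of $u_\delta$ sit on $A$, so $\GG_{|A|}(u_\delta)=(t+\delta)\sum_{n\in A}\varepsilon_n\xx_n$ and $u_\delta-\GG_{|A|}(u_\delta)=y$. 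The $1$-almost greedy inequality with $E=B$ then gives
\[
\|y\|\le \|u_\delta-P_B(u_\delta)\|=\Big\|x+\delta\sum_{n\in A}\varepsilon_n\xx_n\Big\|,
\]
and letting $\delta\to 0^+$ yields $\|y\|\le\|x\|$. Swapping the roles of $A$ and $B$ gives the reverse inequality.

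For the converse, I fix a finitely supported $x$, let $m\in\NN$, and let $A=\rho(\{1,\ldots,m\})$, so $\GG_m(x)=P_A(x)$. Given any $E$ with $|E|=m$, replacing $x$ by $x-P_{A\cap E}(x)$ reduces the problem to $A\cap E=\emptyset$. My plan is to deform $E$ into $A$ one index at a time: enumerate $A=\{n_1,\ldots,n_m\}$ and $E=\{k_1,\ldots,k_m\}$ in decreasing order of coefficient modulus and consider the chain $E_j=\{n_1,\ldots,n_j,k_{j+1},\ldots,k_m\}$ with $E_0=E$, $E_m=A$. Setting $c_j=x-P_{E_j\cup\{n_{j+1}\}}(x)$, the consecutive residuals differ only at $n_{j+1}$ and $k_{j+1}$:
\[
x-P_{E_j}(x)=c_j+\xx_{n_{j+1}}^*(x)\xx_{n_{j+1}},\quad x-P_{E_{j+1}}(x)=c_j+\xx_{k_{j+1}}^*(x)\xx_{k_{j+1}},
\]
and the natural greedy ordering forces $|\xx_{n_{j+1}}^*(x)|$ to dominate both $|\xx_{k_{j+1}}^*(x)|$ and $\sup_\ell|\xx_\ell^*(c_j)|$.

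The core of the argument is the following \emph{one-swap lemma}: whenever $c\in\XX$, $p\ne q$ lie outside $\supp(c)$, $|a|\ge|b|$, and $|a|\ge\sup_\ell|\xx_\ell^*(c)|$, then $\|c+b\xx_q\|\le\|c+a\xx_p\|$. My plan for it is threefold. First, Property (A) applied to the disjoint greedy permutation moving the singleton $\{p\}$ to $\{q\}$ at height $t=|a|$ gives $\|c+a\xx_p\|=\|c+a\xx_q\|$. Second, the consequence of Property (A) for non-disjoint greedy permutations (noted in the excerpt just after the definition of $C$-symmetric for largest coefficients) yields $\|c+\mu a\xx_q\|=\|c+a\xx_q\|$ for every unimodular scalar $\mu$. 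Third, since the closed unit disk of the scalar field (real or complex) is the convex hull of the unit circle, any $\lambda$ with $|\lambda|\le 1$ decomposes as $\lambda=\sum_i\alpha_i\mu_i$ with $|\mu_i|=1$, $\alpha_i\ge 0$, $\sum_i\alpha_i=1$, so the triangle inequality gives $\|c+\lambda a\xx_q\|\le\|c+a\xx_q\|$; taking $\lambda=b/a$ (with the degenerate case $a=0$, forcing $b=0$, trivial) finishes the lemma. Iterating the one-swap $m$ times along the chain telescopes into $\|x-P_E(x)\|\ge\|x-P_A(x)\|$.

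The one-swap lemma is where the difficulty concentrates, since Property (A) by itself does not supply suppression unconditionality (in accordance with the fact that being $1$-almost greedy is strictly weaker than being $1$-greedy; compare Theorems~\ref{1GCharacterization} and \ref{1QGCharacterization}). The decisive idea is to exploit Property (A) in both its disjoint flavour (to reposition a top-level coefficient from $p$ to $q$) and its non-disjoint flavour (to multiply a top-level coefficient by a unimodular scalar), and then to invoke convexity of the norm on the scalar unit disk in order to convert the symmetric \emph{equalities} furnished by Property (A) into the one-sided \emph{inequality} that the almost greedy definition demands.
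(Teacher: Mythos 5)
Your argument is essentially correct and, in the harder direction, genuinely different from the paper's. For ``$1$-almost greedy $\Rightarrow$ Property (A)'' your $\delta$-perturbation $u_\delta$ is a compressed version of what the paper does (it proves Lemma~\ref{improvingAQ}, an extension of the almost greedy inequality to arbitrary greedy approximations, precisely by such a perturbation, and then applies it to $u=z+t\sum_{A}\varepsilon_n\xx_n+t\sum_B\theta_n\xx_n$); the two are the same idea. For the converse the paper factors the proof through the quantitative Proposition~\ref{CAGTheorem}: it first shows that Property (A) forces the basis to be $1$-suppression quasi-greedy (via Proposition~\ref{ImprovingSLC} with $A=\emptyset$, $B=\{k\}$ and induction), and then combines suppression quasi-greediness with symmetry for largest coefficients through a multiplier theorem (Theorem~\ref{multiplier}) and a convexified reformulation of Property (A). You instead run a direct one-index-at-a-time swap along the chain $E=E_0,\dots,E_m=A$, with each swap justified by a ``one-swap lemma'' built from the same two ingredients the paper uses (Property (A) to relocate and re-sign a top-level coefficient; convexity of the scalar unit disk to shrink it). Your chain bookkeeping is sound: $n_{j+1},k_{j+1}\notin\supp(c_j)$, $|\xx^*_{n_{j+1}}(x)|$ dominates $|\xx^*_{k_{j+1}}(x)|$ and all coefficients of $c_j$ because $A$ is the greedy set, and the swaps telescope to $\Vert x-P_A(x)\Vert\le\Vert x-P_E(x)\Vert$. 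What your route buys is a self-contained, more elementary proof of the isometric statement; what it loses is the quantitative content of Proposition~\ref{CAGTheorem}: iterating your one-swap lemma with a constant $C>1$ costs roughly $C^{3}$ per swap, hence $C^{3m}$ overall, whereas the paper's multiplier argument gives the clean bound $CK$ independent of $m$.

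There is one omission you must repair. You prove the converse only for finitely supported $x$, but the almost greedy inequality has to hold for every $x\in\XX$, and the greedy operators are not continuous, so the passage to general $x$ is not automatic. Moreover, under the paper's conventions the symmetry-for-largest-coefficients property (hence Property (A) and its consequence for non-disjoint greedy permutations that your one-swap lemma invokes) is stated for finitely supported vectors, so even within a single swap the vector $c_j$ may need to be approximated. Both issues are handled by the same standard perturbation scheme the paper carries out explicitly in Lemma~\ref{improvingAQ} and Proposition~\ref{ImprovingSLC} (perturb $x$ to a finitely supported $z$ with the same greedy set and the same projections onto the finitely many relevant index sets, then let the perturbation tend to zero, using that $\Vert P_E\Vert$ and $\Vert P_A\Vert$ are finite for finite $E,A$). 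This is routine but needs to be said; as written, your proof of the converse establishes the inequality only on the dense set of finitely supported vectors.
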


\section{Proof of the Main Theorem}
\noindent
The proof of  Theorem~\ref{1AGTheorem} will rely on the following result. Notice its analogy with Theorem~\ref{DKOSGeneralization}.
 \begin{proposition}\label{CAGTheorem} Let $\BB$ be a basis in a Banach space $\XX$.
 \begin{enumerate}
 \item[(i)]If $\BB$ is $C$-almost greedy then $\BB$ is $C$-suppression quasi-greedy and    $C$-symmetric for largest coefficients.
 \item[(ii)] Conversely, if $\BB$ is $K$-suppression quasi-greedy and  $C$-symmetric for largest coefficients then $\BB$ 
 is $C K$-almost greedy.
  \end{enumerate}
  \end{proposition}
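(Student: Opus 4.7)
For part~(i), I would verify both consequences directly. For $C$-suppression quasi-greediness, the key observation is that $\inf_{|A|=m}\|x - P_A(x)\| \le \|x\|$ for every $x$ and $m$: given $\varepsilon > 0$, the convergence of the Schauder expansion of $x$ allows one to pick $N$ so large that $\|\sum_{n=N+1}^{N+m}\xx_n^*(x)\xx_n\| < \varepsilon$, and taking $A = \{N+1, \ldots, N+m\}$ gives $\|P_A(x)\| < \varepsilon$, hence $\|x - P_A(x)\| \le \|x\| + \varepsilon$; the almost greedy hypothesis then yields $\|x - \GG_m(x)\| \le C(\|x\| + \varepsilon)$, and $\varepsilon \to 0$ finishes. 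For $C$-symmetry for largest coefficients, given a disjoint greedy permutation $y = z + t\sum_{n\in B} \theta_n \xx_n$ of $x = z + t\sum_{n\in A} \varepsilon_n \xx_n$, I introduce for $\delta > 0$ the tie-breaking perturbation
\[
u_\delta = z + (t + \delta)\sum_{n \in A}\varepsilon_n \xx_n + t\sum_{n \in B}\theta_n \xx_n.
\]
The $|A|$ coefficients of $u_\delta$ of largest absolute value are exactly those on $A$ (each equal to $t + \delta$), so the natural greedy ordering unambiguously yields $\GG_{|A|}(u_\delta) = (t + \delta)\sum_{n\in A} \varepsilon_n \xx_n$ and hence $u_\delta - \GG_{|A|}(u_\delta) = y$. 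Applying $C$-almost greediness with the competing projection $P_B$ gives $\|y\| \le C\|z + (t + \delta)\sum_{n\in A}\varepsilon_n \xx_n\|$, and $\delta \to 0^+$ finishes the argument.

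For part~(ii), given $x$, $m$, and $A$ with $|A| = m$, I would let $B$ satisfy $\GG_m(x) = P_B(x)$ and set $D = B \setminus A$, $E = A \setminus B$, $I = A \cap B$, $r = x - P_{A \cup B}(x)$, and $\gamma = \min_{n \in B}|\xx_n^*(x)|$. The coefficient inequalities $|\xx_n^*(x)| \ge \gamma$ on $B \supset D$ and $|\xx_n^*(x)| \le \gamma$ off $B$ (in particular on $E$ and on $\supp r$), together with the identities
\[
x - P_A(x) = r + P_D(x), \qquad x - P_B(x) = r + P_E(x),
\]
drive the argument. Replacing $x$ by $x - P_I(x)$ one may, if convenient, reduce to the case $I = \emptyset$, in which $A = E$ and $B = D$ are disjoint of the same cardinality.

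The factor $K$ is extracted via suppression quasi-greediness: on the combined vector $x - P_I(x) = r + P_D(x) + P_E(x)$ the coefficients on $D$ dominate all others, so $P_D(x)$ is an $|D|$-term greedy approximation of $x - P_I(x)$, and the generalized bound~\eqref{eq:Csupcond} delivers
\[
\|x - P_B(x)\| = \|(x - P_I(x)) - P_D(x)\| \le K\,\|x - P_I(x)\|.
\]
The factor $C$ is then extracted using symmetry for largest coefficients together with a tie-breaking auxiliary construction analogous to the $u_\delta$ of part~(i): the admissibility condition $\sup_n|\xx_n^*(r)| \le \gamma$ allows $r$ to play the role of a common tail at level $t = \gamma$, and equalizing the magnitudes on $D$ and $E$ to the common value $\gamma$ places the comparison inside the scope of the disjoint greedy permutation definition. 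The principal technical obstacle is precisely that the coefficients of $x$ on $D$ and $E$ are not uniformly equal to $\gamma$, which precludes a literal invocation of the definition; I expect the resolution to combine a scaling/perturbation argument with a second application of~\eqref{eq:Csupcond} to absorb the non-uniformity with arbitrarily small loss. Combining the two estimates yields $\|x - \GG_m(x)\| \le CK\,\|x - P_A(x)\|$ for every admissible $A$, establishing $CK$-almost greediness.
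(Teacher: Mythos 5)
Your part~(i) is correct. Choosing a far-out set $A$ of cardinality $m$ so that $\|x-P_A(x)\|\le\|x\|+\varepsilon$ gives the suppression quasi-greedy bound directly, and your tie-breaking perturbation $u_\delta$ is a clean substitute for the paper's Lemma~\ref{improvingAQ} (which reaches the same conclusion for arbitrary greedy approximations by a perturbation argument of the same flavour); both routes land on $\|y\|=\|u-\gr_m(u)\|\le C\|u-P_B(u)\|=C\|x\|$.

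Part~(ii) has a genuine gap, located exactly where you flag it. You apply the two hypotheses in the wrong order: extracting the factor $K$ first leaves you with $\|x-P_B(x)\|\le K\|x-P_I(x)\|$, after which you must prove
\[
\|x-P_I(x)\|=\|r+P_D(x)+P_E(x)\|\le C\,\|r+P_D(x)\|=C\,\|x-P_A(x)\|
\]
using \emph{only} the symmetry for largest coefficients, since any further appeal to quasi-greediness pushes the constant past $CK$. But that property only governs the insertion or relocation of terms whose coefficients dominate all others, whereas the terms being inserted here (those on $E$) are dominated by the coefficients on $D$; the definition simply does not reach this situation, and your proposed fix (``a second application of~\eqref{eq:Csupcond}'') would cost an extra factor of $K$. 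The missing ingredient is convexity, used twice. First, the definition of $C$-symmetry upgrades to $\|w\|\le C\|w-P_{A'}(w)+t\sum_{n\in B'}\varepsilon_n\xx_n\|$ whenever $|A'|\le|B'|$, $B'\cap\supp w=\emptyset$, and all coefficients of $w$ have modulus at most $t$ (Proposition~\ref{ImprovingSLC}: the point is that $P_{A'}(w)$ lies in the convex hull of suitable sign patterns at level $t$). Second, one proves the multiplier bound $\bigl\|\sum_i\lambda_i a_{\rho(i)}\xx_{\rho(i)}\bigr\|\le K\bigl\|\sum_i a_{\rho(i)}\xx_{\rho(i)}\bigr\|$ for non-decreasing $(\lambda_i)\subset[0,1]$ along a greedy ordering, by writing such multipliers as convex combinations of $\{0,1\}$-valued ones to which \eqref{eq:Csupcond} applies. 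With these, the correct order is: apply the upgraded symmetry to $x-P_B(x)$ (remove $P_{A\setminus B}$, insert terms at the exact level $t=\min_{n\in B}|a_n|$ on $B\setminus A$) to obtain $\|x-P_B(x)\|\le C\|x-P_{A\cup B}(x)+\sum_{n\in B\setminus A}\tfrac{t}{|a_n|}a_n\xx_n\|$, and then invoke the multiplier bound with $\lambda_i=t/|a_{\rho(i)}|$ on $B\setminus A$ and $\lambda_i=1$ elsewhere to pass from the flattened coefficients to $K\|x-P_A(x)\|$. Your plan has the raw pieces ($r$ as common tail, the level $\gamma$), but without the multiplier lemma the non-uniformity of the coefficients on $D$ cannot be absorbed at cost $K$.
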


 In order to show Proposition~\ref{CAGTheorem} we will need  the full force of the hypotheses.  Part (i) uses an equivalent re-formulation of  the condition defining almost greedy bases which will give us for free a small chunk of what we aim to prove. 

\begin{lemma}\label{improvingAQ} Suppose $\BB=(\xx_{n})_{n=1}^{\infty}$ is  $C$-almost greedy. Then for  $x\in\XX$, $m\in\NN$,  and any $m$-term greedy approximation  $\gr_m(x)$ of $x$
 we have
\begin{equation*}
\Vert x - \gr_m(x)  \Vert  \le C \inf\Big\{ \Vert x -P_A(x)\Vert \colon 0\le |A|\le m\Big\}.
\end{equation*}
\end{lemma}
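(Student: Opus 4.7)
The statement strengthens the $C$-almost greedy hypothesis in two ways: the infimum is now taken over all sets with $|A|\le m$ rather than only $|A|=m$, and $\GG_m(x)$ is replaced by an arbitrary $m$-term greedy approximation $\gr_m(x)$. My plan is to recover both extensions by a two-step approximation argument applied to a single perturbation of $x$.

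First, to pass from $\GG_m$ to $\gr_m(x)=P_\Lambda(x)$ (with $|\Lambda|=m$), I would perturb $x$ to $x_\delta:=x+\delta P_\Lambda(x)$ for $\delta>0$. This perturbation multiplies every $\Lambda$-coefficient of $x$ by $(1+\delta)$ while leaving the other coefficients unchanged. Since $\min_{n\in\Lambda}|\xx_n^*(x)|\ge|\xx_l^*(x)|$ for every $l\notin\Lambda$ by the greedy-ordering property, the $\Lambda$-coefficients of $x_\delta$ strictly dominate those off $\Lambda$. Hence $\Lambda$ is unambiguously the support of the $m$ largest magnitudes of $x_\delta$, the natural greedy approximation satisfies $\GG_m(x_\delta)=P_\Lambda(x_\delta)=(1+\delta)P_\Lambda(x)$, and
\[
x_\delta-\GG_m(x_\delta)=x-P_\Lambda(x)=x-\gr_m(x),
\]
independently of $\delta$.

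Second, given $A$ with $k:=|A|\le m$, I would enlarge $A$ to the size-$m$ set $A_N:=A\cup\{N+1,\dots,N+m-k\}$ for $N>\max(A\cup\Lambda)$. Applying the $C$-almost greedy inequality to $x_\delta$ with the set $A_N$ gives
\[
\|x-\gr_m(x)\|=\|x_\delta-\GG_m(x_\delta)\|\le C\,\|x_\delta-P_{A_N}(x_\delta)\|.
\]
Letting $N\to\infty$, the added indices are disjoint from $\Lambda$, so $\xx_n^*(x_\delta)=\xx_n^*(x)$ there; semi-normalization together with convergence of $x=\sum_n\xx_n^*(x)\xx_n$ forces $|\xx_n^*(x)|\to0$, hence $\|P_{A_N\setminus A}(x_\delta)\|\to0$ and $P_{A_N}(x_\delta)\to P_A(x_\delta)$. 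Letting finally $\delta\to0$ yields $P_A(x_\delta)\to P_A(x)$, and the required bound $\|x-\gr_m(x)\|\le C\|x-P_A(x)\|$ follows.

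The main delicacy is the choice of perturbation in the first step. A naive attempt to boost only the coefficients on $\Lambda\setminus\supp\GG_m(x)$ leaves residual ties at the threshold $t:=\min_{n\in\Lambda}|\xx_n^*(x)|$ between indices in $\Lambda\cap\supp\GG_m(x)$ and $\supp\GG_m(x)\setminus\Lambda$, so the natural greedy ordering of the perturbed vector might still select a set different from $\Lambda$. Scaling the entire set $\Lambda$ by $1+\delta$ removes all these ties uniformly and sidesteps any case analysis.
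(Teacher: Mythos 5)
Your proof is correct, and at bottom it follows the same two-pronged strategy as the paper's: perturb $x$ to break ties at the threshold, and enlarge $A$ to a set of cardinality exactly $m$. The execution, however, differs in ways that make your version cleaner. The paper perturbs twice --- first to some $y$ whose $m$-th greedy approximation is strictly greedy and agrees with $\gr_m(x)$ on $E$, then to a finitely supported $z$ --- without writing the perturbations down, and pays for this with a chain of triangle inequalities whose error terms involve $\Vert P_E\Vert$ and $\Vert P_A\Vert$ and must be sent to zero at the end. Your explicit choice $x_\delta=x+\delta P_\Lambda(x)$ has the decisive extra feature that the residual is exactly invariant, $x_\delta-\GG_m(x_\delta)=x-\gr_m(x)$, so no error term appears on the left-hand side at all. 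Likewise, the paper enlarges $A$ to a set $B$ of cardinality $m$ with $P_B(x)=P_A(x)$ exactly, which is precisely why it first reduces to finitely supported $x$; you instead pad $A$ with far-out indices and let them escape to infinity using $|\xx_n^*(x)|\to 0$, avoiding that reduction altogether. The one point worth an extra line is the degenerate case $t:=\min_{n\in\Lambda}|\xx_n^*(x)|=0$, where the scaled coefficients on $\Lambda$ no longer \emph{strictly} dominate those off $\Lambda$; but then $\supp(x)\subset\Lambda$, so $x=\gr_m(x)$ and the inequality is trivial.
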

\begin{proof} Our aim  is to prove that
\begin{equation}\label{EqDefAlmGre}
\Vert x- \gr_m(x) \Vert  \le  C \Vert x -P_A(x)\Vert,
\end{equation}
for $x\in \XX$, $m\in \NN$, 
$\gr_m(x)$ $m$-term greedy approximation  to $x$,
 and $A\subset \NN$ with $|A|\le m$.  

To that end, we start by assuming two extra conditions.
Suppose first that 
 $|\xx^*_{\rho_{(m+1)}}(x)| < |\xx^*_{\rho_{(m)}}(x)|$, where $\rho$ is the natural greedy ordering for $x$.   In this case $\gr_m(x)=\GG_m(x)$, and we say that
$\GG_m(x)$  is a \textit{strictly greedy approximation} to $x$.  Suppose also that  $x$ is finitely supported.
Then  there is  $A\subset B\subset\NN$ with $|B|=m$  such 
$P_A(x)=P_B(x)$. Therefore
\[
\Vert x- \gr_m(x) \Vert  =
\Vert x-\GG_m(x)\Vert\le C  \Vert x -P_B(x)\Vert = C \Vert x -P_A(x)\Vert,
\]
and we are done. 

To obtain  \eqref{EqDefAlmGre}  in the general case we use a standard perturbation argument. 
Let $E=\{\rho(i) \colon 1\le i \le m\}$.
Given $\delta>0$ there is $y\in\XX$ so that $\Vert x-y\Vert\le\delta$,  $\GG_m(y)$ is a strictly greedy approximation to  $y$, and
$\gr_m(x)=P_E(x)=P_E(y)=\GG_m(y)$. Then 
there is some  finitely supported $z$ in $\XX$ such that $\Vert y-z\Vert \le \delta$, $\GG_m(z)$ is a strictly greedy approximation to $z$, and  $\GG_m(z)= 
  P_E(z)$. Hence,
  \begin{align*}
\Vert x- \gr_m(x) \Vert  & \le  \Vert x-z \Vert + \Vert z-\GG_m(z)\Vert + \Vert  \GG_m(z)-\gr_m(x)\Vert \\
& \le  C \Vert z -P_A(z )\Vert +  (1+\Vert P_E\Vert) \Vert x- z \Vert\\
& \le  C (\Vert z - x \Vert  +  \Vert  x-P_A(x) \Vert  +  \Vert  P_A(x) - P_A(z )\Vert) \\  &\quad+ (1+\Vert P_E\Vert) \Vert x- z \Vert\\ 
& \le  C   \Vert  x-P_A(x) \Vert +  ( 1+ C + \Vert P_E\Vert + C  \Vert P_A\Vert ) \Vert x- z \Vert\\ 
& \le  C   \Vert  x-P_A(x) \Vert +  2( 1+ C + \Vert P_E\Vert + C  \Vert P_A\Vert )  \delta.
 \end{align*}
Letting  $\delta$ tend to zero we obtain \eqref{EqDefAlmGre} and  the proof is over.
\end{proof}

The proof of Part (ii) in Proposition~\ref{CAGTheorem}  relies on some stability properties  of quasi-greedy bases with respect to the multiplication by {\it certain} bounded sequences.
 
\begin{lemma}\label{finitemultiplier} Suppose $(\xx_{n})_{n=1}^{\infty}$ is $C$-suppression quasi-greedy. Then for any  $N\in\NN$, any $N$-tuple 
$(a_i)_{i}^N$ such that  $(|a_i|)_{i}^N$ is non-increasing, and any injective map $\rho\colon\{1,\dots,N\}\to\NN$ we have
\[
\left\Vert \sum_{i=1}^{N}\lambda_{i} a_i \xx_{\rho(i)} \right\Vert\le C\left\Vert \sum_{i=1}^{N}  a_i \xx_{\rho(i)} \right\Vert,
\]
 for all multipliers $(\lambda_{i})_{i=1}^{N}$ such that $0\le \lambda_{1}\le \lambda_2\le\dots \le \lambda_{N}\le 1$.\end{lemma}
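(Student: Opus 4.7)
The proof I would give uses Abel summation to rewrite $\sum_{i=1}^N \lambda_i a_i \xx_{\rho(i)}$ as a non-negative combination of tails of $y := \sum_{i=1}^N a_i \xx_{\rho(i)}$, each of which is an error of a greedy approximation of $y$. Concretely, setting $\lambda_0 = 0$ and $\mu_j = \lambda_j - \lambda_{j-1}$, the monotonicity hypothesis on $(\lambda_i)$ gives $\mu_j \ge 0$ and $\sum_{j=1}^N \mu_j = \lambda_N \le 1$. Swapping the order of summation yields
\[
\sum_{i=1}^{N}\lambda_i a_i \xx_{\rho(i)} \;=\; \sum_{j=1}^{N} \mu_j\, z_j, \qquad \text{where } z_j := \sum_{i=j}^{N} a_i \xx_{\rho(i)}.
\]

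The second step is to identify $z_j$ as $y - \gr_{j-1}(y)$ for some $(j-1)$-term greedy approximation $\gr_{j-1}(y)$ to $y$. This is precisely where the assumption that $(|a_i|)_{i=1}^N$ is non-increasing enters: it says that $\rho$ (extended arbitrarily to an injection on $\NN$ whose image contains $\supp(y)$) is a greedy ordering of $y$, so the partial sum $\sum_{i=1}^{j-1} a_i \xx_{\rho(i)} = \sum_{i=1}^{j-1} \xx_{\rho(i)}^*(y)\xx_{\rho(i)}$ is a legitimate greedy approximation in the sense of \eqref{eq:Csupcond}. (The degenerate case in which some trailing $a_i$ vanish causes no harm: then $y$ is supported inside $\{\rho(1),\dots,\rho(i_0-1)\}$ and the corresponding $z_j$ either remains a valid greedy error or is $0$.)

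The final step is to invoke the $C$-suppression quasi-greedy hypothesis through \eqref{eq:Csupcond}: $\|z_j\| = \|y - \gr_{j-1}(y)\| \le C\|y\|$ for every $j$. Plugging this into the Abel expansion and using $\mu_j \ge 0$ and $\sum_j \mu_j \le 1$,
\[
\left\|\sum_{i=1}^{N}\lambda_i a_i \xx_{\rho(i)}\right\| \le \sum_{j=1}^{N} \mu_j \|z_j\| \le C\|y\| \sum_{j=1}^{N} \mu_j \le C \left\|\sum_{i=1}^{N} a_i \xx_{\rho(i)}\right\|,
\]
which is exactly the desired inequality.

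The only real subtlety, and the one step to verify with some care, is the identification of $\sum_{i=1}^{j-1} a_i \xx_{\rho(i)}$ as a bona fide $(j-1)$-term greedy approximation of $y$ in the generality allowed by the statement, since $\rho$ is only assumed to be injective on $\{1,\dots,N\}$ and some coefficients $a_i$ may coincide in modulus or even vanish. Once that is granted, the computation above is essentially automatic; apart from this bookkeeping there is no genuine difficulty.
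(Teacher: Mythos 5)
Your proof is correct and is essentially the paper's argument in explicit form: the Abel summation decomposition $\sum_i \lambda_i a_i \xx_{\rho(i)} = \sum_j \mu_j z_j$ with $\mu_j \ge 0$, $\sum_j \mu_j \le 1$ is exactly the statement that $(\lambda_i)_{i=1}^N$ lies in the convex hull of the $0$--$1$ step vectors $(0,\dots,0,1,\dots,1)$, which is how the paper reduces to the greedy-error bound \eqref{eq:Csupcond}. The bookkeeping you flag about $\rho$ being a legitimate greedy ordering (ties and vanishing coefficients included) is handled correctly and matches the paper's direct application of \eqref{eq:Csupcond} to $\sum_{i=1}^N a_i \xx_{\rho(i)}$.
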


\begin{proof} 
Applying  \eqref{eq:Csupcond} to $\sum_{i=1}^N a_i \xx_{\rho(i)}$ we get the claim for all  $(\lambda_i)_{i=1}^N$  in the set
\[\mathcal S=\left\{(\underbrace{0,\dots,0}_{m}, \underbrace{1,1,\dots, 1}_{N-m})\colon 0\le  m\le N\right\}.\]
 Therefore the statement holds for all $N$-tuples in the convex hull  $\co(\mathcal S)$ of $\mathcal S$. But $\co(\mathcal S)$ is precisely the collection of all $(\lambda_{i})_{i=1}^{N}$ of the desired form. 
\end{proof}

From this lemma we infer the following (non-linear) multiplier boundedness theorem.

\begin{theorem}\label{multiplier}Suppose  $(\xx_{n})_{n=1}^{\infty}$ is  $C$-suppression quasi-greedy. Then for any  $x\in \XX$ and any  
greedy ordering $\rho$  of $x$,
\[
\left\Vert \sum_{i=1}^\infty\lambda_i \xx_{\rho(i)}^*(x)\xx_{\rho(i)}\right\Vert\le C \Vert x\Vert,
\]
whenever  $(\lambda_i)_{i=1}^{\infty}$ is a non-decreasing sequence of scalars with $0\le \lambda_i\le 1$ for all $i$.
\end{theorem}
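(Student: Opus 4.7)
The plan is to pass from the finite multiplier estimate in Lemma~\ref{finitemultiplier} to the infinite-series version by truncation, using the fact (noted in the introduction) that if $\BB$ is quasi-greedy then the rearranged series $\sum_{i=1}^{\infty}\xx_{\rho(i)}^{*}(x)\xx_{\rho(i)}$ converges to $x$ for every greedy ordering $\rho$ of $x$.

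First, fix $x\in\XX$, a greedy ordering $\rho$ of $x$, and a non-decreasing sequence $(\lambda_i)_{i=1}^\infty$ in $[0,1]$. By the definition of greedy ordering, the scalars $a_i:=\xx_{\rho(i)}^{*}(x)$ satisfy $(|a_i|)_{i=1}^\infty$ non-increasing. I would apply Lemma~\ref{finitemultiplier} with the $N$-tuple $(a_i)_{i=1}^N$, the injective map $\rho|_{\{1,\dots,N\}}$, and the multipliers $(\lambda_i)_{i=1}^N$, to obtain
\[
\left\Vert \sum_{i=1}^{N}\lambda_{i} \xx_{\rho(i)}^{*}(x)\xx_{\rho(i)} \right\Vert\le C\left\Vert \sum_{i=1}^{N} \xx_{\rho(i)}^{*}(x)\xx_{\rho(i)} \right\Vert
\]
for every $N\in\NN$. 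Since $\BB$ is quasi-greedy, the right-hand side converges to $C\Vert x\Vert$ as $N\to\infty$, so it only remains to verify that the left-hand side converges as well, and to pass to the limit.

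To establish convergence on the left, I would show that the partial sums form a Cauchy sequence by applying Lemma~\ref{finitemultiplier} on tails. Given $N<M$, the coefficients $(|a_i|)_{i=N+1}^{M}$ are still non-increasing (being a tail of a non-increasing sequence), the multipliers $(\lambda_i)_{i=N+1}^{M}$ are still non-decreasing and bounded by $1$, and $\rho|_{\{N+1,\dots,M\}}$ is still injective, so re-indexing and invoking the lemma yields
\[
\left\Vert \sum_{i=N+1}^{M}\lambda_{i} \xx_{\rho(i)}^{*}(x)\xx_{\rho(i)} \right\Vert\le C\left\Vert \sum_{i=N+1}^{M} \xx_{\rho(i)}^{*}(x)\xx_{\rho(i)} \right\Vert.
\]
The right-hand side tends to $0$ as $N,M\to\infty$ because the unweighted series converges by quasi-greediness, and so the weighted partial sums are Cauchy and hence convergent in $\XX$. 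Taking the limit $N\to\infty$ in the first display then gives the desired inequality.

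The only mild subtlety, and what I would consider the main technical point, is the legitimate reindexing needed to invoke Lemma~\ref{finitemultiplier} on a tail $\{N+1,\dots,M\}$ rather than $\{1,\dots,M-N\}$; but this is harmless since the lemma's hypotheses (non-increasing moduli of coefficients, non-decreasing multipliers in $[0,1]$, injective placement map) are preserved under index shift. Everything else is a direct passage to the limit using the quasi-greedy convergence of the greedy rearrangement.
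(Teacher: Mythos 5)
Your proof is correct and follows essentially the same route as the paper: apply Lemma~\ref{finitemultiplier} to tails $\sum_{i=M}^{N}$ to get Cauchyness of the weighted series from the quasi-greedy convergence of the unweighted one, then pass to the limit in the inequality for the full partial sums. The reindexing you flag is indeed harmless, and the paper glosses over it in exactly the same way.
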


\begin{proof} Lemma~\ref{finitemultiplier} yields that for all  $M$, $N \in\NN$ with $M\le N$ we have
\[
\left\Vert \sum_{i=M}^N \lambda_i \xx_{\rho(i)}^*(x)\xx_{\rho(i)}\right\Vert\le 
C \left\Vert \sum_{i=M}^N  \xx_{\rho(i)}^*(x)\xx_{\rho(i)}\right\Vert.
\]
 Hence $ \sum_{i=1}^\infty\lambda_i \xx_{\rho(i)}^*(x)\xx_{\rho(i)}$ is a Cauchy series and therefore  converges. Moreover 
\[
\left\Vert \sum_{i=1}^\infty\lambda_i \xx_{\rho(i)}^*(x)\xx_{\rho(i)}\right\Vert\le  C \limsup_N
\left\Vert \sum_{i=1}^N  \xx_{\rho(i)}^*(x)\xx_{\rho(i)}\right\Vert
= C\Vert x\Vert.
\]
\end{proof}

We will also need the following  re-formulation of the symmetry for largest coefficients. 

\begin{proposition}\label{ImprovingSLC} Suppose  $(\xx_{n})_{n=1}^{\infty}$ is  $C$-symmetric for largest coefficients. Then, for any  $x\in \XX$,
\[
\Vert x\Vert \le C\left\Vert x-P_A(x) + t\sum_{n\in B} \varepsilon_n \xx_n\right\Vert,
\]
whenever $A$ and $B$ are such that 
$0\le|A|\le |B|<\infty$ and $B\cap\supp x=\emptyset$,   $|\varepsilon_n|=1$ for all $n\in B$, and  $|\xx_n^*(x)| \le t$ for all $n\in\NN$.
\end{proposition}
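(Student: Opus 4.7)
The approach is to obtain the inequality from a single application of the hypothesis that $\BB$ is $C$-symmetric for largest coefficients. The main obstacle is that the disjoint-greedy-permutation framework requires the two moved index sets to have equal cardinality and to consist of positions where the relevant vector has coefficients of modulus exactly $t$, whereas neither condition is built into the statement. I will bypass the first issue by padding $A$ with ``null'' indices and the second by realising $x$ as a convex combination of vectors whose $A$-coordinates are all $\pm t$.

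First, a standard truncation argument using the partial sums $x_N=\sum_{n\le N}\xx_n^*(x)\xx_n$, which eventually satisfy the hypotheses and converge to $x$ in norm, reduces matters to the case in which $x$ is finitely supported. Since $P_A(x)$ depends only on $A\cap\supp(x)$, I may replace $A$ by $A\cap\supp(x)$ and assume $A\subseteq\supp(x)$; in particular, $A\cap B=\emptyset$. Because $\NN\setminus(\supp(x)\cup B)$ is infinite, I can enlarge $A$ to a set $A^*\supseteq A$ with $|A^*|=|B|$ and $A^*\setminus A$ disjoint from $\supp(x)\cup B$; since the added indices carry zero coefficient of $x$, we still have $P_{A^*}(x)=P_A(x)$.

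Next, for each $n\in A^*$ I write $\xx_n^*(x)=t\lambda_n\sigma_n$ with $\lambda_n\in[0,1]$ and $\sigma_n\in\{-1,1\}$ (setting $\lambda_n=0$ on $A^*\setminus A$), and for $\eta\in\{-1,1\}^{A^*}$ I define
\[
x_\eta=(x-P_A(x))+t\sum_{n\in A^*}\eta_n\xx_n,\qquad c_\eta=\prod_{n\in A^*}\frac{1+\lambda_n\sigma_n\eta_n}{2}.
\]
Expanding this product shows that $(c_\eta)_\eta$ is a probability distribution with $\sum_\eta c_\eta\eta_m=\lambda_m\sigma_m$ for every $m\in A^*$, so that $\sum_\eta c_\eta x_\eta=x$; the triangle inequality then yields $\Vert x\Vert\le\max_\eta\Vert x_\eta\Vert$.

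To conclude, set $z=x-P_A(x)$ and fix $\eta\in\{-1,1\}^{A^*}$. The finitely supported vectors
\[
x_\eta=z+t\sum_{n\in A^*}\eta_n\xx_n\quad\text{and}\quad y:=x-P_A(x)+t\sum_{n\in B}\varepsilon_n\xx_n=z+t\sum_{n\in B}\varepsilon_n\xx_n
\]
satisfy $|A^*|=|B|$, $A^*\cap B=\emptyset$, $\supp(z)\cap(A^*\cup B)=\emptyset$, and $\sup_n|\xx_n^*(z)|\le t$. Hence $x_\eta$ is a disjoint greedy permutation of $y$, so the $C$-symmetry hypothesis gives $\Vert x_\eta\Vert\le C\Vert y\Vert$, and combining this with the maximum bound above completes the proof.
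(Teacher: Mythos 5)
Your proof is correct and follows essentially the same route as the paper: both arguments reduce to finitely supported $x$, pad $A$ to a set of cardinality $|B|$ using indices outside $\supp(x)\cup B$, and then use convexity to pass from coefficient patterns of modulus exactly $t$ (where the symmetry-for-largest-coefficients hypothesis applies) to the actual coefficients of $x$ on $A$ --- the paper states this as $P_A(x)\in\co\{t\sum_{n\in D}\theta_n\xx_n\colon|\theta_n|=1\}$, while you exhibit the convex combination explicitly with product weights. One small caveat: writing $\xx_n^*(x)=t\lambda_n\sigma_n$ with $\sigma_n\in\{-1,1\}$ covers only real scalars, whereas the paper allows complex ones; the fix is immediate, since with $|\sigma_n|=1$ you can take the vectors $z+t\sum_{n\in A^*}\eta_n\sigma_n\xx_n$ over $\eta\in\{-1,1\}^{A^*}$ with weights $\prod_{n\in A^*}(1+\lambda_n\eta_n)/2$, and the rest of your argument goes through verbatim because $\eta_n\sigma_n$ is still unimodular.
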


\begin{proof}Assume, without lost of generality, that $A\subset \supp x$. By density, it suffices to prove the result when
$x$ is finitely supported. In this case, there is $A\subset D$ such that $|D|=|B|$, $D\cap B=\emptyset$ and $D\cap \supp x=A$. Our hypothesis gives
\begin{equation}\label{eq:ImprovedSLC}
\left \Vert x -P_A(x)+  u\right\Vert \le C\left\Vert x-P_A(x) + t\sum_{n\in B} \varepsilon_n \xx_n\right\Vert
\end{equation}
for all $u$ in the set
 \[
\mathcal U=\left\{ t \sum_{n\in D} \theta_n \xx_n \colon |\theta_n|=1\right\}.
\]
Consequently, \eqref{eq:ImprovedSLC} holds for any
$u\in\co(\mathcal U)$. Since
\[
\co(\mathcal U)=\left\{\sum_{n\in D} s_n \xx_n \colon |s_n|\le t \right\},
\]
we have that $P_A(x)\in\co(\mathcal U)$, and we are done. \end{proof}

We are now in a position to complete  the proof of Proposition~\ref{CAGTheorem} and, immediately after, that of Theorem~\ref{1AGTheorem}.
 \smallskip

\begin{proof}[Proof of Proposition~\ref{CAGTheorem}] (i) Assume $\BB$ is $C$-almost greedy. 
To prove that $\BB$ is $C$-suppression quasi-greedy, just take $A=\emptyset$ in Lemma~\ref{improvingAQ}.
In order to get that
$\BB$ is  $C$-symmetric for largest coefficients, we pick $x$, $y\in\XX$
 such that $y$ is a disjoint greedy permutation of $x$. Let  
$t$, 
$A$, $B$, $(\varepsilon_{n})_{n\in A}$, $(\theta_{n})_{n\in B}$, and   $z$  be as in \eqref{eq:greedypermutation}  and consider
\[
u=z+t \sum_{n\in A} \varepsilon_{n} \xx_{n} +  t \sum_{n\in B}\theta_{n} \xx_{n}.
\]
Let $m=|A|=|B|$. Then   $\gr_m(u):=t \sum_{n\in A} \varepsilon_{n} \xx_{n}$ is a 
$m$-term greedy approximation to $u$,
and   
$P_B(u)= t \sum_{n\in B}\theta_{n} \xx_{n}$. Hence, by Lemma~\ref{improvingAQ},
\[
\Vert y \Vert =\Vert u- \gr_m(u) \Vert \le C \Vert u-P_B(u)\Vert=C\Vert  x \Vert.
\]

\smallskip (ii)
 Let $x\in\XX$, $m\in\NN$, and $A\subset \NN$ with $|A|=m$. For $n\in\NN$ put  $a_n=\xx_n^*(x)$.  Pick out $B\subset \NN$ of cardinality $m$ such that $P_B(x)=\GG_m(x)$.  Denote $t=\min \{|a_{n}|\colon n\in B\}$ and  $q=|A\setminus B|=|B\setminus A|$.
 By Proposition~\ref{ImprovingSLC},
 \[
 \begin{aligned}
 \Vert x-\GG_m(x)\Vert & \le C \left\Vert x-\GG_m(x) - P_{A\setminus B} (x) + t\sum_{n\in B\setminus A} \sgn(a_n) \xx_n\right\Vert\\
 &= C \left\Vert x-  P_{A\cup B} (x) +  \sum_{n\in B\setminus A} \frac{t}{|a_n|}   a_n  \xx_n\right\Vert.
 \end{aligned}
 \]
 Since the $|a_n|\le t \le |a_k|$ for $n\in\NN\setminus (A\cup B)$ and $k\in B\setminus A$, there is  
  a greedy ordering  $\rho$ for $x-P_A(x)$ such that $B\setminus A=\{ \rho(i) \colon 1\le i \le q\}$. 
  Define a sequence $(\lambda_i)_{i=1}^\infty$ by 
\[
\lambda_i=\begin{cases} t/ |a_{\rho(i)}| & \text{ if } 1\le i \le q, \\  1 & \text{ if } q<i.\end{cases}
\]
We infer that  that $(\lambda_i)_{i=1}^\infty$ is non-decreasing. By Theorem~\ref{multiplier},
\begin{equation*}
 \left\Vert x-  P_{A\cup B} (x) +  \sum_{n\in B\setminus A} \frac{t}{|a_n|}   a_n  \xx_n\right\Vert 
 =\left\Vert  \sum_{i=1}^\infty  \lambda_i a_{\rho(i)} \xx_{\rho(i)}  \right\Vert
  \le K \Vert x-P_A(x)  \Vert.
\end{equation*}
 Combining we obtain
$
\Vert x-\GG_m(x)\Vert  \le C K  \Vert x- P_A(x)\Vert,
$
as desired.\end{proof}

\begin{remark}The same technique  used in the proof of Part (ii) of Proposition~\ref{CAGTheorem} enables us to amalgamate some of the steps in the proof,
achieved in  \cite{DKOSZ2014},  of
Theorem~\ref{DKOSGeneralization}.  This slight improvement leads to show  that, if $\BB$  is $K$-suppression unconditional and $C$-symmetric for largest coefficients, then $\BB$ is $CK$-greedy.
\end{remark}

\begin{proof}[Proof of Theorem~\ref{1AGTheorem}] 
 Let  $\BB=(\xx_n)_{n=1}^\infty$ be a basis for a Banach space $\XX$.
Appealing to Proposition~\ref{CAGTheorem} we need only show that 
if $\BB$ has Property (A) then
 $\BB$ is 
$1$-suppression quasi-greedy.
Let  $x\in\XX$,  $k\notin \supp(x)$,  and 
$s$  be  a scalar such that $|\xx_n^*(x)|\le |s|$ for all $n\in\NN$. 
Applying Proposition~\ref{ImprovingSLC}, in the case in which $A=\emptyset$ and $B=\{k\}$, we obtain
$
\Vert x\Vert \le \Vert x +   s \xx_k \Vert.
$
From here, we conclude the proof using a straightforward induction argument.\end{proof}

As an on-the-spot corollary we can  re-state Theorem~\ref{1GCharacterization} involving the three kinds of greedy-like bases in the isometric case.

\begin{corollary} A basis $\BB$ of a Banach space $\XX$ is $1$-greedy if and only if it is $1$-almost greedy and $1$-quasi-greedy.

\end{corollary}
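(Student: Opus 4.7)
The plan is to assemble the corollary by bookkeeping among three results already at our disposal: Theorem~\ref{1GCharacterization} (the characterization of $1$-greedy bases as $1$-suppression unconditional plus Property (A)), Theorem~\ref{1QGCharacterization} (the isometric characterization of quasi-greedy bases identifying $C_{qg}=1$ with $K_{su}=1$), and the Main Theorem~\ref{1AGTheorem} (the identification of $1$-almost greedy bases with bases satisfying Property (A)). No new analysis is needed.

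For the forward implication, I would first observe that $1$-almost greediness is immediate: the infimum in \eqref{defalmostgreedy} is taken over the subfamily of linear combinations $\sum_{n\in A}\alpha_{n}\xx_{n}$ with $\alpha_n=\xx_n^*(x)$, so the infimum in \eqref{defgreedy} is dominated by it, and thus the $1$-greedy bound $\|x-\GG_m(x)\|\le\inf_{|A|=m,\alpha_n}\|x-\sum_{n\in A}\alpha_n\xx_n\|$ downgrades to the almost-greedy bound with constant $1$. For $1$-quasi-greediness, I would invoke Theorem~\ref{1GCharacterization} to extract that $\BB$ is $1$-suppression unconditional, and then feed this into the equivalence in Theorem~\ref{1QGCharacterization} to conclude that $C_{qg}=1$.

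For the converse, assume $\BB$ is both $1$-almost greedy and $1$-quasi-greedy. Theorem~\ref{1AGTheorem} converts the first hypothesis into Property (A). The second hypothesis, via the equivalence $C_{qg}=1\iff K_{su}=1$ of Theorem~\ref{1QGCharacterization}, gives $1$-suppression unconditionality. Applying Theorem~\ref{1GCharacterization} in the direction that a $1$-suppression unconditional basis with Property (A) is $1$-greedy closes the loop.

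There is no real obstacle here; the proof is a one-line diagram chase once the three characterizations are granted. The only point to watch is that ``$1$-quasi-greedy'' could in principle refer to any of the constants $C_w$, $C_\ell$, or $C_{qg}$ being equal to $1$, but Theorem~\ref{1QGCharacterization} guarantees that in the isometric regime all three collapse simultaneously to $K_{su}=1$, so the ambiguity is immaterial.
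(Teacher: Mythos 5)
Your proof is correct and is precisely the paper's intended argument: the corollary is presented as an ``on-the-spot'' consequence of Theorems~\ref{1GCharacterization}, \ref{1QGCharacterization} and \ref{1AGTheorem}, assembled by exactly the diagram chase you describe (including the observation that the infimum in \eqref{defgreedy} is over a larger set than in \eqref{defalmostgreedy}, so $1$-greedy trivially implies $1$-almost greedy). One caveat on your closing remark: Theorem~\ref{1QGCharacterization} identifies $C_{qg}=1$ and $C_w=1$ with $K_{su}=1$ but says nothing about $C_\ell$, and in fact $C_\ell=1$ is strictly weaker (Property (A) alone already forces $C_\ell=1$, as the proof of Theorem~\ref{1AGTheorem} shows, without forcing unconditionality), so the ambiguity is \emph{not} immaterial and ``$1$-quasi-greedy'' must be read as $C_{qg}=1$ (equivalently $C_w=1$) for the converse to hold.
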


\begin{remark} Dilworth et al.\ gave a  first characterization of  almost greedy bases as those bases that are quasi-greedy and democratic with coarse estimates of the constants involved (see \cite{DKKT2003}*{Theorem 3.3}). Proposition~\ref{CAGTheorem} reflects the fact that if we wish to attain the optimality in the almost greedy constant of a basis we need to replace the property of being democratic with that of being symmetric for largest coefficients. At the same time, in order to be able to strictly remain within the class of  almost greedy bases  we must use the suppression quasi-greedy constant $C_{\ell}$ instead of $C_w$ or $C_{qg}$.
  Indeed, as soon as the quasi-greedy constant $C_{w}$ comes into play and attains the value $1$, the basis is $1$-suppression unconditional
 (see Theorem~\ref{1QGCharacterization}) and therefore we have trespassed on greedy territory! 
This observation, in combination with the likeness in the definition of $C_{\ell}$ and the definitions of greedy and  almost greedy constants,
 conveys that $C_{\ell}$ might be the fair candidate to be called the quasi-greedy constant of the basis.
 \end{remark}
 
\begin{remark} An idea that was  implicit in  \cite{AlbiacWojt2006} and  that is present as well   in the work of Dilworth et al.\ \cite{DKOSZ2014}, is that being $1$-superdemocratic cannot supplant Property (A) to determine the isometric properties of greedy-like bases, even when combined with other stronger features of the basis.  Indeed, the basis $\BB$ in the Banach space of Example 5.4 of \cite{AlbiacWojt2006} is  $1$-lattice unconditional and $1$-superdemocratic but fails to be $1$-greedy because $\BB$ does not have Property (A). The same reason, failure of Property (A),  is the one that, according to Theorem~\ref{1AGTheorem}, prevents  a basis  
from being $1$-almost greedy.

\end{remark}

\begin{remark} Although throughout this paper the word \textit{basis} meant \textit{Schauder basis} we would like to  stress that all we have said here, as well as the other characterizations of greedy-like bases in the isometric cases, remains valid in the more general framework of semi-normalized bounded biorthogonal systems, that is,   sequences $(\xx_{n},\xx_{n}^{\ast})_{n=1}^\infty\subset \XX\times \XX^{\ast}$ such that
\begin{enumerate}
\item $\XX= \overline{\spn \{ \xx_{n}\colon n\in\NN\}}$.
\item $\xx_{n}^{\ast}(\xx_{k})=1$ if $k=n$ and $\xx_{n}^{\ast}(\xx_{k})=0$ otherwise.
\item  $\sup \{ \max\{\Vert \xx_n\Vert,\Vert \xx_{n}^{\ast}\Vert\} \colon n\in\NN\}<\infty$.
\end{enumerate}
This kind of bases is quite common and there are important examples such as the trigonometric system in the space $L_{1}$ which belong to this class and yet they are not Schauder bases. Following Wojtaszczyk's approach in his study of greedy-like bases for bounded biorthogonal systems    \cite{Wo2000}, in this paper we have written down the proofs so that they work in this more general setting.
\end{remark}

\subsection*{Acknowledgement}  Both authors partially supported by the Spanish Research Grant \textit{ An\'alisis Vectorial, Multilineal, y Aplicaciones}, reference number MTM2014-53009-P. The first-named author  also acknowledges the support of Spanish Research Grant \textit{ Operators, lattices, and structure of Banach spaces}, 
with reference MTM2012-31286.

\begin{bibsection}
\begin{biblist}

\bib{AlbiacWojt2006}{article}{
   author={Albiac, F.},
   author={Wojtaszczyk, P.},
   title={Characterization of 1-greedy bases},
   journal={J. Approx. Theory},
   volume={138},
   date={2006},
   number={1},
   pages={65--86},
   issn={0021-9045},
}

\bib{AAJAT}{article}{
   author={Albiac, F.},
   author={Ansorena, J.L.},
   title={Characterization of 1-quasi-greedy bases},
   journal={arXiv: 1504.04368v1 [math.FA] },
   date={2015},
}

\bib{DKOSZ2014}{article}{
   author={Dilworth, S. J.},
   author={Kutzarova, D.},
   author={Odell, E.},
   author={Schlumprecht, Th.},
   author={Zs{\'a}k, A.},
   title={Renorming spaces with greedy bases},
   journal={J. Approx. Theory},
   volume={188},
   date={2014},
   pages={39--56},
}

\bib{DKKT2003}{article}{
   author={Dilworth, S.J.},
   author={Kalton, N.J.},
   author={Kutzarova, D.},
   author={Temlyakov, V. N.},
   title={The thresholding greedy algorithm, greedy bases, and duality},
   journal={Constr. Approx.},
   volume={19},
   date={2003},
   number={4},
   pages={575--597},
}

\bib{DOSZ2011}{article}{
   author={Dilworth, S.J.},
   author={Odell, E.},
   author={Schlumprecht, Th.},
   author={Zs{\'a}k, A.},
   title={Renormings and symmetry properties of 1-greedy bases},
   journal={J. Approx. Theory},
   volume={163},
   date={2011},
   number={9},
   pages={1049--1075},
}

\bib{KonyaginTemlyakov1999}{article}{
    author={Konyagin, S. V.},
    author={Temlyakov, V. N.},
     title={A remark on greedy approximation in Banach spaces},
   journal={East J. Approx.},
    volume={5},
      date={1999},
    number={3},
     pages={365\ndash 379},
}
 \bib{Wo2000}{article}{
 author={Wojtaszczyk, P.},
 title={Greedy algorithm for general biorthogonal systems},
 journal={J. Approx. Theory},
 volume={107},
 date={2000},
 number={2},
 pages={293--314},
}

 \end{biblist}
\end{bibsection}

\end{document}